\theoremstyle{plain}
\newtheorem{corollary}{Corollary}
\newtheorem{lemma}{Lemma}
\newtheorem{proposition}{Proposition}
\newtheorem{theorem}{Theorem}
\numberwithin{equation}{section}
\begin{document}

\title[An Asymptotic formula for the zeros]{An asymptotic formula for the zeros of the deformed exponential function}

\author{Cheng Zhang}

\address{Department of Mathematics, Johns Hopkins University, Baltimore, MD21218, United States}

\email{czhang67@jhu.edu}




\keywords{Asymptotic formula; deformed exponential function; functional differential equations}





\begin{abstract}
       We study the asymptotic representation for the zeros of the deformed exponential function $\sum\nolimits_{n = 0}^\infty  {\frac1{n!}{q^{n(n - 1)/2}{x^n}}} $, $q\in (0,1)$. Indeed, we obtain an asymptotic formula for these zeros: \[x_n=- nq^{1-n}(1 + g(q)n^{-2}+o(n^{-2})),n\ge1,\]
        where $g(q)=\sum\nolimits_{k = 1}^\infty  {\sigma (k){q^k}}$ is the generating function of the sum-of-divisors function $\sigma(k)$. This improves earlier results by Langley\cite{langley2000certain} and Liu\cite{liu1998some}. The proof of this formula is reduced to estimating the sum of an alternating series, where the Jacobi's triple product identity plays a key role.
\end{abstract}

\maketitle

\section*{Introduction}

In this paper, we are interested in the deformed exponential function\begin{equation}\label{fx}f(x) = \sum\limits_{n = 0}^\infty  {\frac{{{x^n}}}{{n!}}{q ^{ n(n - 1)/2}}},\  0<q<1,
\end{equation}
which is the  unique entire solution to the Cauchy problem of the functional differential equation
\[y'(x) = y(qx), 0<q<1, y(0) = 1.\]
The function $f(x)$ appears naturally and frequently in pure mathematics as well as statistical physics. There are still many open questions and conjectures on it, especially on the asymptotic formula for its zeros. Indeed, it relates closely to the generating function for Tutte polynomials of the complete graph $K_n$ in combinatorics, the Whittaker and Goncharov constants in complex analysis, and the partition function of one-site lattice gas with fugacity $x$ and two-particle Boltzmann weight $q$ in statistical mechanics\cite{scott2005repulsive}. One may see more interesting applications and conjectures concerning this function in Liu\cite{liu1998some} and Sokal\cite{sokal2009}.

In 1972 Morris et al.\cite{morris1972phragmen} used a theorem of Laguerre to prove that $f(x)$ has infinitely many real zeros and these zeros are all negative and simple. They also obtained that there is no other zero to the analytic extension (to the complex plane) of $f(x)$ by using the so-called multiplier sequence (a modest gap in their proof was filled by Iserles\cite{iserles1993generalized}). In \cite{morris1972phragmen} and \cite{iserles1993generalized}, the authors put forward several conjectures on the zeros of $f(x)$. They have been investigated by several authors(see \cite{liu1998some}\cite{langley2000certain}\cite{sokal2012leading}). Recently, more interesting conjectures on the asymptotic formula for the zeros were introduced by Sokal\cite{sokal2009}.  In \cite{langley2000certain}, the author was able to show that the zeros form a strictly decreasing sequence of negative numbers $(x_n)$, $n\ge 1$, such that
\begin{equation}\label{ratio}\frac{x_{n+1}}{x_{n}}=\frac1q\Big(1+\frac1n\Big)+o(n^{-2}),\ n\ge 2,\end{equation}and
\begin{equation}\label{xnlangley}x_{n}=-nq^{1-n}(\gamma+o(1)),\ n\ge1,\end{equation}
where $\gamma$ is some positive constant independent of $n$.

In this paper, we use the power series representation of $f(x)$ to show that the constant $\gamma=1$ in \eqref{xnlangley}. The well-known Jacobi's triple product identity plays a key role in the proof. Meanwhile, we obtain that the error term in \eqref{xnlangley} is exactly $O(n^{-2})$, whose leading coefficient, as a function of $q$, is the generating function of the sum-of-divisors function in number theory. It is not difficult to see that the signs of the extrema of $f(x)$ are alternating, namely ${( - 1)^n}f({x_n}/q) > 0,\ n\ge 1$, which means that the function oscillates on the negative real axis. So we prove a corollary that describes the asymptotic and oscillatory behavior of the function $f(x)$ on the negative real axis, where the infinite product representation of $f(x)$ is used.
\begin{theorem}\label{thm}The zeros of $f(x)$ constitute one strictly decreasing sequence of negative numbers $(x_n)$, $n\ge 1$, such that
\[{x_n} =  - nq^{1-n}(1 + g(q)n^{-2}+o(n^{-2})),\]
where $g(q)$ is the generating function of $\sigma(k)$, namely
\[g(q) = \sum\limits_{k = 1}^\infty  {\sigma (k){q^k}}.\]
Here $\sigma(k)$ is the sum of all the positive divisors of the positive integer $k$.
\end{theorem}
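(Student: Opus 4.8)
The plan is to substitute the ansatz $x_n=-nq^{1-n}(1+\eps_n)$ directly into $f(x_n)=0$ and to extract $\eps_n$ by a careful asymptotic analysis of the resulting alternating series. Writing $a_m=\frac{x_n^m}{m!}q^{m(m-1)/2}$ and recentering the summation index at $m=n$ (the location of the dominant terms, since $|a_{m+1}/a_m|=|x_n|q^m/(m+1)\approx1$ near $m=n$), the ratio $a_{n+j}/a_n$ simplifies remarkably: the powers of $q^{-n}$ cancel, and after putting $j=-l$ on the left tail one is left with a scalar equation of the shape
\begin{equation*}
1+\sum_{j\ge1}(-1)^j q^{j(j+1)/2}(1+\eps_n)^{j}\,\rho_+(j,n)+\sum_{l=1}^{n}(-1)^l q^{l(l-1)/2}(1+\eps_n)^{-l}\,\rho_-(l,n)=0,
\end{equation*}
where $\rho_+(j,n)=\prod_{i=1}^{j}(1+i/n)^{-1}$ and $\rho_-(l,n)=\prod_{i=0}^{l-1}(1-i/n)$ are factorial-ratio factors tending to $1$. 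Since $a_n\ne0$, solving $f(x_n)=0$ is equivalent to solving this equation for $\eps_n$.

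The heart of the matter is that in the limit $n\to\infty$, with $\eps_n\to0$ and $\rho_\pm\to1$, the left-hand side becomes a \emph{bilateral} theta series. Replacing $1+\eps_n$ by a parameter $\gamma$, reindexing the second sum by $k=l-1$, and exploiting the symmetry of $(-1)^j\gamma^jq^{j(j+1)/2}$ under $j\mapsto-j-1$, the limiting expression is
\begin{equation*}
\Theta(\gamma):=\sum_{j=-\infty}^{\infty}(-1)^j\gamma^j q^{j(j+1)/2}=\prod_{k\ge1}(1-q^k)(1-\gamma q^k)(1-\gamma^{-1}q^{k-1}),
\end{equation*}
the last equality being Jacobi's triple product identity (with $x=q^{1/2}$, $y=-\gamma q^{1/2}$). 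The product exhibits the zeros of $\Theta$ in $\gamma$ as exactly $\{q^m:m\in\mathbb Z\}$; in particular $\Theta(1)=0$, which makes the leading balance hold with $\gamma=1$. Together with a crude a priori localization of $x_n$ (the factor-$q^{-1}$ spacing in \eqref{ratio} confines the limit constant to a single period, excluding the spurious roots $q^m$, $m\ne0$), this already pins down $\gamma=1$ in \eqref{xnlangley}.

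To capture the $n^{-2}$ correction I would expand $\rho_\pm$ and $(1+\eps_n)^{\pm}$ in powers of $1/n$ and $\eps_n$, collecting terms by order and treating $\eps_n$ as $O(n^{-2})$ (to be confirmed a posteriori). The $O(1)$ term is $\Theta(1)=0$; the $O(n^{-1})$ term, after the same $k=l-1$ reindexing, telescopes to $\sum_{j\ge1}(-1)^j\tfrac{j(j+1)}2q^{j(j+1)/2}-\sum_{k\ge1}(-1)^k\tfrac{k(k+1)}2q^{k(k+1)/2}=0$, so there is no first-order obstruction. At order $n^{-2}$ the balance reads $A\,\eps_n+Cn^{-2}=o(n^{-2})$, where the $\eps_n$-coefficient is
\begin{equation*}
A=\sum_{j\ge0}(-1)^j(2j+1)q^{j(j+1)/2}=\Theta'(1)=\prod_{k\ge1}(1-q^k)^3
\end{equation*}
(Jacobi's cube identity, obtained by differentiating the triple product at $\gamma=1$), while the surviving factorial-ratio contribution collapses to
\begin{equation*}
C=\frac16\sum_{j\ge0}(-1)^j j(j+1)(2j+1)q^{j(j+1)/2}=\frac13\,q\frac{dA}{dq}.
\end{equation*}
Hence $\eps_n=-\tfrac{C}{A}n^{-2}+o(n^{-2})=g(q)n^{-2}+o(n^{-2})$ with
\begin{equation*}
g(q)=-\frac13\,q\frac{d}{dq}\log\prod_{k\ge1}(1-q^k)^3=\sum_{k\ge1}\frac{kq^k}{1-q^k}=\sum_{k\ge1}\sigma(k)q^k,
\end{equation*}
the final step being the standard Lambert-series expansion of the sum-of-divisors generating function; since $A=\prod_{k\ge1}(1-q^k)^3>0$ the coefficient is well defined. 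This is exactly the claimed formula.

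The main obstacle I anticipate is not the algebra above but its rigorous justification: one must show that replacing $f(x_n)=0$ by the truncation $A\eps_n+Cn^{-2}$ introduces an error that is genuinely $o(n^{-2})$ \emph{uniformly}. This is the alternating-series estimation the abstract alludes to. The super-exponential decay $q^{j(j+1)/2}$ tames the right tail, and the same decay kills the left tail despite the dangerous-looking factor $(1+\eps_n)^{-l}$ and the truncation at $l=n$ (for $l$ near $n$ one has $q^{l(l-1)/2}\approx q^{n^2/2}$, utterly negligible); but making the remainder bounds uniform in $j$ and $l$—so that summation and the $1/n$-expansion may legitimately be interchanged—requires care, as does upgrading the a priori estimate $\eps_n=o(1)$ coming from \eqref{xnlangley} to the needed $\eps_n=O(n^{-2})$ via the nonvanishing of $A$. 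Establishing these uniform estimates, together with the selection of the root $\gamma=1$ by a counting argument, is where the real work lies.
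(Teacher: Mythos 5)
Your formal computation is correct and is in fact the same computation the paper performs in disguise: your $A$ and $C$ are exactly the paper's $h(\alpha)$ and $-H(\alpha)$ (with $\alpha=1/q$), and $-C/A=H(\alpha)/h(\alpha)=\sum_k k/(\alpha^k-1)=g(q)$. The gap is in the two places you defer. First, the root-selection step is not merely unfinished but wrong as sketched: the limiting equation $\Theta(\gamma)=0$ only yields $\gamma\in\{q^m:m\in\mathbb Z\}$, and \eqref{ratio} \emph{cannot} exclude $m\neq0$. Indeed, if $x_n=-nq^{1-n}\gamma(1+\delta_n)$ with any constant $\gamma>0$ and $\delta_n\to0$, then $x_{n+1}/x_n=\tfrac1q(1+\tfrac1n)\tfrac{1+\delta_{n+1}}{1+\delta_n}$, so \eqref{ratio} constrains only the increments $\delta_{n+1}-\delta_n=o(n^{-2})$ and says nothing about $\gamma$; the ambiguity $\gamma\mapsto q^m\gamma$ is an index-shift ambiguity to which ratios of consecutive zeros are blind. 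This matters because your expansion around $\gamma=1$ presupposes $\eps_n=o(1)$, which is equivalent to $\gamma=1$ — the very thing to be proved — so the argument as organized is circular. The paper escapes this by a different mechanism: Lemma~\ref{sign} shows $(-1)^n\,[\lambda h(\alpha)-H(\alpha)]\,f(-(n+\lambda/n)\alpha^{n-1})>0$ for large $n$, so varying $\lambda$ across $H/h$ produces a sign change and hence a zero inside each interval $I_n$; only \emph{after} existence in $I_n$ is known does the $o(n^{-2})$ precision of Proposition~\ref{langley} pin the index of that zero to be exactly $n$ (an index shift $c$ would force $c/(n(n+c))=o(n^{-2})$, so $c=0$).

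Second, the uniform-remainder problem you flag as "where the real work lies" is not a technicality to be filled in later — it is the entire content of the paper's proof, and the paper handles it by a device absent from your plan. Rather than justify interchanging the $1/n$-expansion with the bilateral summation (delicate because $\rho_\pm(j,n)$ is not close to $1$ once $j\sim n$, and $\eps_n$ enters through $(1+\eps_n)^{\pm j}$ with unbounded $j$), the paper never solves $A\eps_n+Cn^{-2}=o(n^{-2})$ at all. It pairs the terms of the alternating series as $v_j=u_{2k-1-j}-u_j$, proves $v_j>0$ and eventual monotonicity $v_j<v_{j+1}$ (Lemma~\ref{lemmavj}), so that the sign of the full series is decided by a fixed finite central block, whose asymptotics produce $\lambda h_{2N-1}(\alpha)-H_{2N-1}(\alpha)$, together with crude tail bounds; Lemma~\ref{lemmapsum} then transfers the conclusion from partial sums to $h$ and $H$. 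Some such sign-bracketing (or a genuinely quantitative implicit-function argument with explicit uniform remainder bounds, plus a zero-counting argument to replace your \eqref{ratio}-based selection of $\gamma$) must be supplied before your derivation of the constant $g(q)$ becomes a proof.
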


\begin{corollary}\label{coro} The oscillation amplitude $A_n$=$|f({x_n}/q)|$ has an asymptotic formula
\[A_n= Cn^{-3/2}e^nq ^{-n(n+1)/2},\]
where $C=C(n, q)$ satisfies $C_1(q)< C< C_2(q)$. Here $C_1$ and $C_2$ are some positive numbers independent of n.
\end{corollary}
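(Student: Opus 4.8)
The plan is to exploit the Hadamard product representation of $f$, which is the tool flagged in the introduction. Since the Taylor coefficients $q^{n(n-1)/2}/n!$ decay faster than any geometric rate, $f$ is entire of order zero, and because $|x_m|\sim m q^{1-m}$ grows super-exponentially we have $\sum_m 1/|x_m|<\infty$; hence the canonical product over the simple negative zeros converges, and $f(0)=1$ forces
\[ f(x)=\prod_{m=1}^\infty\Big(1-\frac{x}{x_m}\Big). \]
Differentiating the functional equation gives $f'(x)=f(qx)$, so $f'(x_n/q)=f(x_n)=0$: the points $x_n/q$ are exactly the critical points of $f$, which justifies calling $A_n=|f(x_n/q)|$ the oscillation amplitude. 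Writing $r_m=\frac{x_n/q}{x_m}$, I would evaluate the product at $x=x_n/q$ to get $A_n=\prod_{m=1}^\infty|1-r_m|$.

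First I would substitute the leading asymptotics $x_m=-mq^{1-m}(1+O(m^{-2}))$ from Theorem~\ref{thm}, giving
\[ r_m=\frac{n}{m}\,q^{\,m-n-1}\big(1+O(n^{-2})+O(m^{-2})\big), \]
and split the product into $m<n$, $m=n$, and $m>n$. For $m<n$ one checks $r_m>1$, so $|1-r_m|=r_m(1-r_m^{-1})$, and the dominant part $\prod_{m=1}^{n-1}r_m$ factors, via $\prod_{m=1}^{n-1}(n/m)=n^n/n!$ and the exponent identity $\sum_{m=1}^{n-1}(m-n-1)=-(n+2)(n-1)/2$, into $\frac{n^n}{n!}\,q^{-(n+2)(n-1)/2}$. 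The single factor $m=n$ contributes $|1-q^{-1}|=q^{-1}(1-q)$, and $q^{-(n+2)(n-1)/2}\cdot q^{-1}=q^{-n(n+1)/2}$, producing the predicted power of $q$. The crucial term is $m=n+1$, where $r_{n+1}\to1$ and $|1-r_{n+1}|=\frac1n(1+O(n^{-1}))$; this is the source of the extra factor $n^{-1}$, and I would control it sharply using the ratio formula \eqref{ratio} rather than the cruder \eqref{xnlangley}. Finally, Stirling's formula turns $n^n/n!$ into $e^n/\sqrt{2\pi n}\,(1+O(n^{-1}))$, which combines with the $n^{-1}$ into the asserted $n^{-3/2}e^n$.

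The remaining factors assemble into two $q$-dependent infinite products bounded between positive constants. For $m>n+1$ one has $r_m\approx q^{m-n-1}\to0$, so $\prod_{m\ge n+2}|1-r_m|$ is comparable to the convergent Euler product $\prod_{i\ge1}(1-q^i)$, while the correction $\prod_{m<n}(1-r_m^{-1})$ is comparable to $\prod_{i\ge2}(1-q^i)$. Since the statement demands only two-sided bounds $C_1(q)<C<C_2(q)$ and not a precise limiting constant, I would not evaluate these products exactly but merely bound each factor above and below, which keeps the argument clean. (As a sanity check, the number of factors with $r_m>1$ is exactly $n$, recovering the sign relation $(-1)^nf(x_n/q)>0$.)

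The hard part will be showing that all these approximations combine into a multiplicative error bounded above and below uniformly in $n$. The per-factor relative errors $O(m^{-2})$ are summable, and the $O(n^{-2})$ error common to every factor through the numerator $x_n$ compounds over the roughly $n$ significant factors to at most $(1+O(n^{-2}))^n=1+O(n^{-1})$; the finitely many small-$m$ factors, where the asymptotic is inaccurate, contribute only a bounded $q$-constant. The genuinely delicate points are $m=n$ and $m=n+1$, where $|1-r_m|$ is not comparable to a fixed constant and a naive relative error could swamp the main term; handling $m=n+1$ via \eqref{ratio} is the heart of the estimate.
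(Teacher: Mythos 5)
Your proposal is correct and follows the same overall route as the paper: both start from the infinite product $f(x)=\prod_{m\ge 1}(1-x/x_m)$ (the paper simply cites Morris et al.\ for this, rather than rederiving it via Hadamard factorization), split the product at the critical index, extract the power of $q$ together with $n^n/n!$ from the factors with $m\le n$, obtain the crucial extra factor $1/n$ from the $m=n+1$ term via the ratio asymptotics, apply Stirling's formula, and bound everything left over by the convergent Euler-type products $\prod_{k\ge1}(1-q^k)$ and $\prod_{k\ge1}(1+q^k)$. The one genuine difference is how the product of zeros $|x_1\cdots x_n|$ is controlled. You substitute the asymptotics of Theorem \ref{thm} into every factor and argue that the compounded multiplicative errors stay bounded (summable $O(m^{-2})$ per-factor errors, plus $(1+O(n^{-2}))^{n}=1+O(n^{-1})$ from the common factor $x_n$); this is sound and yields both the upper and lower bounds by a single mechanism. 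The paper instead treats the two bounds asymmetrically: for the lower bound on $A_n$ it writes $|x_m|=(m+\theta_m)\alpha^{m-1}$ with $\theta_m=O(1/m)$ and bounds $|x_1\cdots x_n|\le C_0\,\Gamma(n+\varepsilon_n+1)\,\alpha^{n(n-1)/2}$, while for the upper bound it avoids error-compounding entirely by invoking the exact coefficient identity $\sum_{i_1<\cdots<i_n}1/(x_{i_1}\cdots x_{i_n})=(-1)^n\alpha^{-n(n-1)/2}/n!$, which, since all terms share the sign $(-1)^n$, gives $|x_1\cdots x_n|\ge n!\,\alpha^{n(n-1)/2}$ with no asymptotic input whatsoever. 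The paper's identity trick buys a clean, uniform-in-$n$ bound precisely where your scheme requires the delicate bookkeeping you flag as the ``hard part''; your scheme buys symmetry and shows the two-sided bound follows from the zero asymptotics alone. Your sanity check on signs (exactly $n$ factors exceed $1$, recovering $(-1)^nf(x_n/q)>0$) matches the paper's observation preceding its proof.
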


\section{Proof of the results}
For simplicity, we set $\alpha=1/q>1$ in the proof.

To obtain our theorem, we study the value of $f( - (k + \lambda {k^{ - 1}}){\alpha ^{k - 1}}), \lambda>0$, $k\ge 1$, which can be written as an alternating series
\[f( - (k + \lambda {k^{ - 1}}){\alpha ^{k - 1}})=  \sum\limits_{n = 0}^\infty  {{( - 1)}^n{u_n}},\]
where \[u_n={\frac{{{{(k + \lambda {k^{ - 1}})}^n}}}
{{n!}}} {\alpha ^{n(2k - n- 1)/2}}.\]
Denote
\[{v_j} = {u_{2k - j - 1}} - {u_j}\quad (0 \le j \le k - 1).\]
At first, we need to figure out some useful properties of $v_j$.

\begin{lemma}\label{lemmavj} For any integer $k \ge 1$, we have
\[\quad {v_j} > 0 \quad (0 \le j \le k - 1),\]
and there exists a positive integer $N=N(\alpha, \lambda)$ such that for large $k$
\[{v_j} < {v_{j + 1}} \quad (0 \le j \le k - N).\]
\end{lemma}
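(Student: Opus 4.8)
The plan is to strip off the symmetric part of the exponent and reduce both assertions to elementary estimates on the ``Poisson weights'' $b_n := c^n/n!$, where $c := k+\lambda k^{-1}$. Writing $u_n = b_n\,\alpha^{E(n)}$ with $E(n) := n(2k-1-n)/2$, the exponent $E$ is symmetric about $n=(2k-1)/2$, so $E(2k-1-j)=E(j)$; a one-line computation also gives $E(j+1)-E(j)=k-1-j$ (equivalently $u_{n+1}/u_n=\frac{c}{n+1}\alpha^{k-n-1}$). The first identity factors the defining difference as
\[v_j=\alpha^{E(j)}\bigl(b_{2k-1-j}-b_j\bigr),\]
transferring every sign question to the positive weights $b_n$.

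For positivity I would use $b_{2k-1-j}/b_j=\prod_{i=j+1}^{2k-1-j}(c/i)$. Since the index range is symmetric about $i=k$, I split off the central factor $c/k$ and pair $i$ with $2k-i$:
\[\frac{b_{2k-1-j}}{b_j}=\frac{c}{k}\prod_{i=j+1}^{k-1}\frac{c^2}{i(2k-i)}.\]
Each paired factor exceeds $1$ because $i(2k-i)\le k^2-1<k^2<c^2=(k+\lambda k^{-1})^2$, and $c/k>1$; hence $b_{2k-1-j}>b_j$ and $v_j>0$ for all $0\le j\le k-1$.

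For the monotonicity I would use $\alpha^{E(j+1)}=\alpha^{k-1-j}\alpha^{E(j)}$ and the factorization above to rewrite $v_{j+1}>v_j$ as the equivalent inequality
\[\alpha^{\,k-1-j}\bigl(b_{2k-2-j}-b_{j+1}\bigr)>b_{2k-1-j}-b_j.\]
Setting $s:=k-1-j$ (so that $0\le j\le k-N$ becomes $N-1\le s\le k-1$) and applying $b_{j+1}=\frac{c}{j+1}b_j$, $b_{2k-2-j}=\frac{2k-1-j}{c}b_{2k-1-j}$, this reduces, after dividing by $b_j$ and by the positive quantity $\alpha^{s}\frac{k+s}{c}-1$, to the single scalar inequality
\[\rho>\frac{\alpha^{s}\dfrac{c}{k-s}-1}{\alpha^{s}\dfrac{k+s}{c}-1}=:A(s,k),\qquad \rho:=\frac{b_{2k-1-j}}{b_j}=\frac{c}{k}\prod_{r=1}^{s}\frac{c^2}{k^2-r^2}.\]
Both sides exceed $1$ and tend to $1$ as $k\to\infty$ for fixed $s$, so the whole content sits in the $k^{-2}$ terms.

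Establishing $\rho>A(s,k)$ uniformly on $N-1\le s\le k-1$ is the crux and, I expect, the main obstacle. The difficulty is that near the cutoff the two differences $b_{2k-1-j}-b_j$ and $b_{2k-2-j}-b_{j+1}$ cluster at the peak $n\approx k$ and agree to leading order, so the comparison lives entirely at the next order. For $s=o(\sqrt{k})$ a Taylor expansion gives $\rho-1\sim k^{-2}\bigl(\tfrac13 s^3+O(s^2)\bigr)$ against $A(s,k)-1\sim k^{-2}\frac{\alpha^{s}}{\alpha^{s}-1}(s^2+2\lambda)$; since $1<\frac{\alpha^{s}}{\alpha^{s}-1}\le\frac{\alpha}{\alpha-1}$ is $k$-independent, the cubic growth of $\rho-1$ overtakes its quadratic competitor once $s$ passes a threshold $N-1=N(\alpha,\lambda)-1$ (which blows up as $\alpha\to1^+$, explaining why $N$ must depend on $\alpha$). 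In the complementary range where $s$ is comparable to $k$ the factor $\alpha^{s}$ is enormous, so $A(s,k)\to\frac{c^2}{k^2-s^2}$, and since $\rho=\frac{c^2}{k^2-s^2}\cdot\frac{c}{k}\prod_{r=1}^{s-1}\frac{c^2}{k^2-r^2}$ already exceeds $\frac{c^2}{k^2-s^2}$ the inequality is immediate. The honest work is to glue these two estimates into one bound valid uniformly across the transitional range and for all large $k$; I would try to package it as a single monotonicity-in-$s$ argument to avoid case analysis, but the uniform control of the $k^{-2}$ coefficients is where the real effort lies.
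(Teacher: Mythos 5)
Your proof of the first assertion is complete and correct, and it is essentially the paper's own argument: factoring out the symmetric exponent $\alpha^{E(j)}$ and pairing the factors of $b_{2k-1-j}/b_j$ about $i=k$ is the same computation as the paper's inequality $\prod_{i=1}^{2k-1-2j}(j+i) < k^{2k-1-2j} < c^{2k-1-2j}$. Your reduction of the second assertion to the scalar inequality $\rho > A(s,k)$ is also correct; it is an equivalent rearrangement of the inequality the paper works with, whose quantity $w_j = \frac{(2k-2-j)!}{(j+1)!\,c^{2k-3-2j}}$ is a rescaled reciprocal of your $\rho$, namely $w_j = c^2/\bigl((k^2-s^2)\rho\bigr)$. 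But from that point on the proposal is not a proof, and you say so yourself: the two regimes you sketch, $s=o(\sqrt k)$ and $s\asymp k$, do not cover the range $N-1\le s\le k-1$ (nothing is said for, e.g., $s\sim k^{2/3}$), the expansions carry no uniform error control, and the ``gluing'' you defer is precisely the content of the lemma. There is also a slip in the large-$s$ regime: $A(s,k)$ is \emph{not} bounded above by its limit $c^2/(k^2-s^2)$; indeed $A(s,k) = \frac{c^2}{k^2-s^2}\cdot\frac{1-\alpha^{-s}(k-s)/c}{1-\alpha^{-s}c/(k+s)}$, and the second factor exceeds $1$ exactly because $c^2 > k^2-s^2$. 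So the observation that $\rho$ exceeds $c^2/(k^2-s^2)$ does not by itself give $\rho > A(s,k)$; you still have to beat the (exponentially small, but nonzero) correction.

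The idea you are missing is the one that lets the paper avoid regime-splitting and uniformity issues altogether. Write the target inequality as $\bigl(\alpha^{k-j-1}-\tfrac{j+1}{c}\bigr)w_j < \alpha^{k-j-1}-\tfrac{c}{2k-1-j}$. Since $w_j$ is increasing in $j$ (a one-line check of the ratio $w_{j+1}/w_j$) and its coefficient is positive, it suffices to prove this inequality with $w_j$ replaced by the single worst-case value $w_{k-N}$: this collapses all of the implicit (factorial) $j$-dependence into one constant, leaving an inequality in which $j$ appears only through elementary explicit functions. The paper then sets $t=2k-1-j$ and proves that $g(t) = \tfrac1t - \tfrac{(2k-t)w_{k-N}}{c^2} - \alpha^{t-k}\tfrac{1-w_{k-N}}{c}$ is negative on all of $[k+N-1,\,2k-1]$ by one-variable calculus: $g''$ is decreasing, and $g(k+N-1)$, $g'(k+N-1)$, $g''(k+N-1)$ are all negative for $N$ and then $k$ large, so $g$ is decreasing and negative on the whole interval. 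The only asymptotic expansions needed are at the single endpoint $t=k+N-1$ (this is where the paper's constant $C=\lambda(2N-3)+\tfrac16(N-2)(N-1)(2N-3)$ arises), which is exactly the uniformity-free substitute for the estimate you could not close. Your instinct to ``package it as a single monotonicity-in-$s$ argument'' is the right one, but the monotonicity must be applied to the factorial ratio $w_j$ before, not after, the pointwise comparison.
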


\begin{proof}
Note that
\[\prod\limits_{i = 1}^{2k - 1 - 2j} {(j + i)}  < {k^{2k - 1 - 2j}} < {(k + \lambda {k^{ - 1}})^{2k - 1 - j}}\quad (0 \le j \le k - 1).\]
This implies
\[\frac{{{{(k + \lambda {k^{ - 1}})}^j}}}
{{j!}} < \frac{{{{(k + \lambda {k^{ - 1}})}^{2k - 1 - j}}}}
{{(2k - 1 - j)!}}\quad (0 \le j \le k - 1).\]
So
\[{\rm{ }}{u_j} < {u_{2k - 1 - j}}\quad (0 \le j \le k - 1),\]
which gives the first inequality.

Let \[{w_j} = \frac{{(2k - 2 - j)!}}
{{(j + 1)!{{(k + \lambda {k^{ - 1}})}^{2k - 3 - 2j}}}}.\]
Then we have
\[0<w_j<1\]
and
\[\frac{{{w_{j + 1}}}}
{{{w_j}}} = \frac{{{{(k + \lambda {k^{ - 1}})}^2}}}
{{(j + 2)(2k - 2 - j)}} > 1.\]
For some positive integer $N$, we denote
\[t = 2k - 1 - j\quad (k + N-1 \le t \le 2k - 1)\]
and
\[g(t) = \frac{1}
{t} - \frac{{(2k - t){w_{k - N}}}}
{{{{(k + \lambda {k^{ - 1}})}^2}}} - {\alpha ^{t - k}}\left( {\frac{{1 - {w_{k - N}}}}
{{k + \lambda {k^{ - 1}}}}} \right).\]
Direct calculation yields
\[g'(t) =  - \frac{1}
{{{t^2}}} + \frac{{{w_{k - N}}}}
{{{{(k + \lambda {k^{ - 1}})}^2}}} - \left( {\frac{{1 - {w_{k - N}}}}
{{k + \lambda {k^{ - 1}}}}} \right){\alpha ^{t - k}}\ln \alpha\]
and
\[g''(t) = \frac{2}
{{{t^3}}} - \left( {\frac{{1 - {w_{k - N}}}}
{{k + \lambda {k^{ - 1}}}}} \right){\alpha ^{t - k}}{(\ln \alpha )^2}.\]
Since $0<w_j<1$, $g''(t)$ is decreasing for $t>0$.

When $N$ is sufficiently large($N>N_1(\alpha, \lambda)$),
\[g''(k+N-1) = \left[ {2 - C{\alpha ^{N - 1}}{{(\ln \alpha )}^2}} \right]{k^{ - 3}} + O({k^{ - 4}})<0,\]
and
\[g'(k + N - 1) = \left[ {2N - 2 - C{\alpha ^{N - 1}}\ln \alpha } \right]{k^{ - 3}} + O({k^{ - 4}}) < 0\] hold for large $k$,
where \[C = \lambda (2N - 3) + \frac{1}{6}(N - 2)(N - 1)(2N - 3).\]
So $g'(t)$ and $g(t)$ are also decreasing for $t\ge k+N-1$.

When $N$ is large enough($N>N_2(\alpha, \lambda)$),
\[g(k + N - 1) = \left[ {\lambda (2N - 1) + \frac{N}
{6}(N - 1)(2N - 1) - C{\alpha ^{N - 1}}} \right]{k^{ - 3}} + O({k^{ - 4}})<0\]holds for large $k$.
So if $N>{\rm max}\{N_1, N_2\}$, we obtain for large $k$
\[g(t) \le g(k + N-1) < 0, \quad t\ge k+N-1.\]
Therefore, \[\frac{1}
{{2k - 1 - j}} - \frac{{(j + 1){w_{k - N}}}}
{{{{(k + \lambda {k^{ - 1}})}^2}}} - {\alpha ^{k - j - 1}}\left( {\frac{{1 - {w_{k - N}}}}
{{k + \lambda {k^{ - 1}}}}} \right) < 0\quad (0 \le j \le k - N).\]
This implies \[\left( {{\alpha ^{k - j - 1}} - \frac{{j + 1}}
{{k + \lambda {k^{ - 1}}}}} \right){w_{k - N}} < {\alpha ^{k - j - 1}} - \frac{{k + \lambda {k^{ - 1}}}}
{{2k - 1 - j}}\quad (0 \le j \le k - N).\]
Since $w_j<w_{j+1}$, we have \[\left( {{\alpha ^{k - j - 1}} - \frac{{j + 1}}
{{k + \lambda {k^{ - 1}}}}} \right){w_j} < {\alpha ^{k - j - 1}} - \frac{{k + \lambda {k^{ - 1}}}}
{{2k - 1 - j}}\quad (0 \le j \le k - N).\]
It is equivalent to \[{v_j} < {v_{j + 1}}\quad (0 \le j \le k - N),\]which completes the proof.
\end{proof}

To estimate the sum of the alternating series $\sum\nolimits_{n=0}^\infty  {(-1)^nu_n}$, we need to use the Jacobi's triple product identity. Let \[h(\alpha ){\text{  = }}\sum\limits_{j = 1}^\infty  {(2j - 1){{( - 1)}^{j - 1}}{\alpha ^{ - j(j - 1)/2}}}\]
and
\[H(\alpha ) = \sum\limits_{j = 1}^\infty  {\frac{j}
{6}(j - 1)(2j - 1){{( - 1)}^j}{\alpha ^{ - j(j - 1)/2}}}.\]
We denote their partial sums by $h_n(\alpha)$ and $H_n(\alpha)$, namely
\[h_n(\alpha ){\text{  = }}\sum\limits_{j = 1}^n  {(2j - 1){{( - 1)}^{j - 1}}{\alpha ^{ - j(j - 1)/2}}}\]
and
\[H_n(\alpha ) = \sum\limits_{j = 1}^n  {\frac{j}
{6}(j - 1)(2j - 1){{( - 1)}^j}{\alpha ^{ - j(j - 1)/2}}}.\]

\begin{proposition}\label{hardy}(Jacobi's triple product identity)
\[h(\alpha)= \prod\limits_{k = 1}^\infty  {{{(1 - {\alpha ^{ - k}})}^3}}.\]
\end{proposition}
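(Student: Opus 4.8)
The plan is to derive this specialization from the general Jacobi triple product identity, which for $|x|<1$ and $z\neq 0$ may be written in the form
\[\sum_{m=-\infty}^\infty (-1)^m z^m x^{m(m+1)/2} = \prod_{n=1}^\infty (1-x^n)(1-zx^n)(1-z^{-1}x^{n-1}).\]
I would take this as a known classical result (see, e.g., the treatments in Hardy--Wright or Andrews) and specialize to $x=\alpha^{-1}\in(0,1)$. The mechanism is that the right-hand side carries, through its $n=1$ factor $(1-z^{-1})$, a simple zero at $z=1$, while the left-hand side vanishes there as well; comparing the two sides after dividing out this common zero is precisely what manufactures the cube.

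Concretely, I would first split off the vanishing factor, writing
\[\prod_{n=1}^\infty(1-z^{-1}x^{n-1}) = (1-z^{-1})\prod_{n=1}^\infty(1-z^{-1}x^n),\]
so that the identity becomes
\[\frac{1}{1-z^{-1}}\sum_{m=-\infty}^\infty (-1)^m z^m x^{m(m+1)/2} = \prod_{n=1}^\infty (1-x^n)(1-zx^n)(1-z^{-1}x^n).\]
Letting $z\to 1$, the right-hand side tends to $\prod_{n=1}^\infty(1-x^n)^3$. For the left-hand side I would use $1/(1-z^{-1}) = z/(z-1)$ and, since $S(z):=\sum_m(-1)^m z^m x^{m(m+1)/2}$ satisfies $S(1)=0$, evaluate the limit as the derivative $\frac{d}{dz}\big[zS(z)\big]\big|_{z=1}=S'(1)=\sum_{m=-\infty}^\infty (-1)^m m\, x^{m(m+1)/2}$.

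The final step is to collapse this bilateral series into the stated one-sided form. Pairing the index $m\ge 0$ with $-m-1$ — both carry the same exponent $m(m+1)/2$, while the sign $(-1)^{-m-1}=-(-1)^m$ flips against the coefficient $-m-1$ — the two contributions combine as $(-1)^m[m+(m+1)]=(-1)^m(2m+1)$, giving $S'(1)=\sum_{m=0}^\infty(-1)^m(2m+1)x^{m(m+1)/2}$. Substituting $x=\alpha^{-1}$ and reindexing by $j=m+1$ turns this into $h(\alpha)$, completing the identification. I expect the main obstacle to be analytic rather than algebraic: one must justify passing to the limit $z\to1$ and differentiating term by term. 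I would handle this by noting that for fixed $x\in(0,1)$ all the series and products converge absolutely and uniformly on a neighborhood of $z=1$ bounded away from the origin, so that $S$ is holomorphic there and the interchange of limit, differentiation, and summation is legitimate.
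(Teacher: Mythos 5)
Your proof is correct. Note, however, that the paper does not prove this proposition at all: it simply cites Hardy (Chapter 3), treating the cube identity $\prod_{k\ge1}(1-\alpha^{-k})^3=\sum_{j\ge1}(-1)^{j-1}(2j-1)\alpha^{-j(j-1)/2}$ as a known classical fact. What you have written out is precisely the classical derivation of that fact (often called Jacobi's identity) from the general triple product: your substituted form of the triple product is correct (it is the standard form under $q^2\mapsto x$, $w\mapsto -zx^{1/2}$), the factorization $\prod_{n\ge1}(1-z^{-1}x^{n-1})=(1-z^{-1})\prod_{n\ge1}(1-z^{-1}x^n)$ is right, $S(1)=0$ follows from the pairing $m\leftrightarrow -m-1$, the limit $\lim_{z\to1} zS(z)/(z-1)=S'(1)$ is legitimate since $zS(z)$ is holomorphic near $z=1$ and vanishes there, and the collapse of the bilateral series to $\sum_{m\ge0}(-1)^m(2m+1)x^{m(m+1)/2}$ is computed correctly, as is the reindexing $j=m+1$ that recovers $h(\alpha)$. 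Your analytic justification is also sound: for fixed $x\in(0,1)$ the bilateral series converges absolutely and uniformly on compact sets bounded away from $z=0$ because $x^{m(m+1)/2}$ decays super-exponentially in $|m|$. The trade-off is the usual one: the paper's citation is economical and defers everything to the literature, while your argument is self-contained modulo the general triple product identity and makes visible exactly where the cube comes from, namely the common simple zero of both sides at $z=1$.
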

\begin{proof}See Hardy\cite{hardy1979introduction}, Chapter 3.
\end{proof}

The following lemma is a simple property of the partial sums.
\begin{lemma}\label{lemmapsum}For any fixed $\alpha>1$, we have $h(\alpha)>0$ and $H(\alpha)>0$. Further, there exists a positive integer $N=N(\alpha)$ such that for any integer $m>N$,\[ \frac{{{H_{2m - 1}}(\alpha )}}
{{{h_{2m - 1}}(\alpha )}} < \frac{{{H_{2m+1}}(\alpha )}}
{{{h_{2m + 1}}(\alpha )}} < \frac{{{H_{2m}}(\alpha )}}
{{{h_{2m}}(\alpha )}} < \frac{{{H_{2m - 2}}(\alpha )}}
{{{h_{2m - 2}}(\alpha )}}.\]
\end{lemma}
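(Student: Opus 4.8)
The plan is to treat the two assertions separately: first the positivity of the limits $h(\alpha)$ and $H(\alpha)$, then the interlacing of the ratios $H_n/h_n$. Throughout I write $q=1/\alpha\in(0,1)$ and abbreviate the absolute values of the $j$-th terms by $c_j=(2j-1)\alpha^{-j(j-1)/2}$ and $d_j=\tfrac{j(j-1)(2j-1)}{6}\alpha^{-j(j-1)/2}$, so that $h_n=\sum_{j=1}^n(-1)^{j-1}c_j$ and $H_n=\sum_{j=1}^n(-1)^j d_j$.

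\emph{Positivity.} The bound $h(\alpha)>0$ is immediate from Proposition \ref{hardy}, since every factor $(1-\alpha^{-k})^3$ is positive for $\alpha>1$. For $H(\alpha)$ the idea is to recognize it as a logarithmic derivative of $h$. Differentiating the defining series term by term (legitimate for $|q|<1$) and using $q\frac{d}{dq}q^{j(j-1)/2}=\tfrac{j(j-1)}{2}q^{j(j-1)/2}$ gives the identity $q\,h'(q)=-3H(\alpha)$, that is $H(\alpha)=-\tfrac13 q\,h'(q)$. Since $h(q)=\prod_{k\ge1}(1-q^k)^3$ is a positive, strictly decreasing function of $q$ on $(0,1)$, we have $h'(q)<0$, whence $H(\alpha)>0$. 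In passing, $h'/h=-3\sum_k kq^k/(1-q^k)$ shows that $H(\alpha)/h(\alpha)=\sum_k kq^k/(1-q^k)=g(q)$, which is the source of the constant in Theorem \ref{thm}.

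\emph{Interlacing.} Set $r_n=H_n/h_n$. The engine of the argument is the one-step difference. Using $h_{n+1}-h_n=(-1)^n c_{n+1}$ and $H_{n+1}-H_n=(-1)^{n+1}d_{n+1}$, a direct cancellation yields
\[ r_{n+1}-r_n=\frac{H_{n+1}h_n-H_nh_{n+1}}{h_nh_{n+1}}=\frac{(-1)^{n+1}\big(d_{n+1}h_n+c_{n+1}H_n\big)}{h_nh_{n+1}}. \]
Write $P_n=d_{n+1}h_n+c_{n+1}H_n$ and $Q_n=h_nh_{n+1}$. Since $h_n\to h(\alpha)>0$ and $H_n\to H(\alpha)>0$, for all large $n$ we have $h_n,H_n>0$, hence $P_n>0$ and $Q_n>0$; therefore $r_{n+1}-r_n$ has sign $(-1)^{n+1}$. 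This already gives $r_{2m+1}<r_{2m}$ directly. The remaining two inequalities come from writing $r_{2m+1}-r_{2m-1}$ and $r_{2m}-r_{2m-2}$ as sums of two consecutive differences; both reduce to comparing the magnitudes $|\Delta_n|=P_n/Q_n$ at consecutive indices. So the crux is that $|\Delta_n|$ is eventually strictly decreasing, which I would obtain from the fast decay: since $c_{n+1},d_{n+1}$ carry the factor $\alpha^{-n(n+1)/2}$, one checks $P_n/P_{n-1}=O(q^n)\to0$ while $Q_{n-1}/Q_n\to1$, so $|\Delta_n|/|\Delta_{n-1}|\to0<1$. Then $|\Delta_{2m-1}|>|\Delta_{2m}|$ gives $r_{2m-1}<r_{2m+1}$, and $|\Delta_{2m-2}|>|\Delta_{2m-1}|$ gives $r_{2m}<r_{2m-2}$, completing the chain once $N$ is chosen large enough for all the ``eventually'' clauses to hold at once.

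I expect the genuinely non-routine point to be the positivity $H(\alpha)>0$: the series defining $H$ is alternating and not termwise positive, so no elementary sign comparison works for $q$ near $1$, and it is the differentiation identity $H=-\tfrac13 q\,h'$ that makes the statement transparent. The interlacing is then essentially bookkeeping, whose only real content is the eventual monotonicity of $|\Delta_n|$, resting on the fast decay coming from the quadratic exponents $q^{j(j-1)/2}$.
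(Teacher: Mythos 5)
Your proof is correct, and it splits into one half that matches the paper and one half that genuinely does not. The positivity part is the paper's own argument: the paper also differentiates Jacobi's triple product, writing $H(\alpha)=\frac{\alpha}{3}h'(\alpha)=h(\alpha)\sum_{k\ge 1}\frac{k}{\alpha^k-1}>0$, which is exactly your identity $H=-\frac{1}{3}q\,h'(q)$ in the variable $q=1/\alpha$; so the step you flag as the genuinely non-routine one is indeed handled the same way. The interlacing part is where you diverge. The paper never forms differences of the ratios $r_n=H_n/h_n$: it works with the partial sums themselves, using the alternating-series bracketing valid for large $m$, namely $0<h_{2m-2}<h_{2m}<h_{2m+1}<h_{2m-1}$ and $0<H_{2m-1}<H_{2m+1}<H_{2m}<H_{2m-2}$. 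These two chains run in opposite directions because the terms of $h$ carry $(-1)^{j-1}$ while those of $H$ carry $(-1)^j$, so in each of the three claimed inequalities the numerator increases while the positive denominator decreases, and the whole chain follows from the trivial rule $0<A<A'$, $B>B'>0$ $\Rightarrow$ $A/B<A'/B'$ — no size estimates on increments at all. Your route, via the cross-difference identity $r_{n+1}-r_n=(-1)^{n+1}\bigl(d_{n+1}h_n+c_{n+1}H_n\bigr)/(h_nh_{n+1})$ plus the decay $|\Delta_n|/|\Delta_{n-1}|\to 0$, secretly exploits the same opposite-parity structure (it is what makes the two terms of $P_n$ add rather than cancel), but it needs the extra quantitative step about eventually decreasing gaps. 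The trade-off: the paper's proof is shorter and purely order-theoretic, while yours yields strictly more information, namely that $r_n$ converges to $H(\alpha)/h(\alpha)$ at the super-geometric rate $|\Delta_n|\sim \frac{n^3}{3h(\alpha)}\,\alpha^{-n(n+1)/2}$, which could be of independent use in making the $o(n^{-2})$ in Theorem \ref{thm} effective.
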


\begin{proof}By the Jacobi's triple product identity above, we have $h(\alpha)>0$ and
\[H(\alpha ) = \frac{\alpha }
{3}h'(\alpha ) = h(\alpha )\sum\limits_{k = 1}^\infty  {\frac{k}
{{{\alpha ^k} - 1}}}  > 0.\]
Hence, there exists a positive integer $N=N(\alpha)$ such that for any integer $m>N$, both $h_{2m-2}(\alpha)$ and $H_{2m-1}(\alpha)$ are positive. Since $0 < {h_{2m - 2}}(\alpha ) < {h_{2m}}(\alpha ) < {h_{2m + 1}}(\alpha ) < {h_{2m - 1}}(\alpha )$
and $0 < {H_{2m - 1}}(\alpha ) < {H_{2m + 1}}(\alpha ) < {H_{2m}}(\alpha ) < {H_{2m - 2}}(\alpha )$
, we have\[ \frac{{{H_{2m - 1}}(\alpha )}}
{{{h_{2m - 1}}(\alpha )}} < \frac{{{H_{2m + 1}}(\alpha )}}
{{{h_{2m+ 1}}(\alpha )}} < \frac{{{H_{2m}}(\alpha )}}
{{{h_{2m}}(\alpha )}} < \frac{{{H_{2m - 2}}(\alpha )}}
{{{h_{2m - 2}}(\alpha )}}.\]
\end{proof}

\begin{lemma}\label{sign} Suppose $\lambda h(\alpha)-H(\alpha)\ne 0$. Let \[{\xi _n}(\lambda ) =  - (n + \lambda /n){\alpha ^{n - 1}}.\]
Then for large $n$,
\[{( - 1)^n}[\lambda h(\alpha)-H(\alpha)]f( \xi_n) > 0.\]
\end{lemma}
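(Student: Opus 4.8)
The plan is to evaluate $f(\xi_n)$ directly from the alternating series $\sum_{m=0}^\infty(-1)^m u_m$ attached to it (the setup taken with $k=n$; I write the dummy index as $m$, so $u_m=\frac{(n+\lambda/n)^m}{m!}\alpha^{m(2n-1-m)/2}$ and $v_j=u_{2n-1-j}-u_j$), and to extract its leading behaviour by pairing the terms that are symmetric about the peak. The exponent $m(2n-1-m)/2$ is invariant under $m\mapsto 2n-1-m$, while $(-1)^{2n-1-j}=(-1)^{j+1}$; pairing index $j$ with $2n-1-j$ for $0\le j\le n-1$ therefore collapses the first $2n$ terms into $\sum_{j=0}^{n-1}(-1)^{j+1}v_j$. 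Reindexing by $s=n-1-j$ gives
\[
f(\xi_n)=(-1)^n\sum_{s=0}^{n-1}(-1)^s v_{n-1-s}+R_n,\qquad R_n=\sum_{m\ge 2n}(-1)^m u_m.
\]
The tail is harmless: from $u_m/u_{m-1}=\frac{n+\lambda/n}{m}\,\alpha^{n-m}$ the sequence $u_m$ is decreasing for $m\ge 2n$, so by Leibniz $|R_n|\le u_{2n}$, and the product $\prod_{i=n+1}^{2n}u_i/u_{i-1}$ carries a factor $\alpha^{-n(n+1)/2}$ that makes $u_{2n}$ exponentially smaller than $u_n$, hence $R_n=o(u_n/n^2)$.

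The heart of the argument is the asymptotics of $v_{n-1-s}=u_{n+s}-u_{n-1-s}$ near the peak $m=n$. Using the same ratio I would expand $u_{n+s}/u_n$ and $u_{n-1-s}/u_n$ to order $n^{-2}$; both equal $\alpha^{-s(s+1)/2}$ times $1-\tfrac{s(s+1)}{2n}+O(n^{-2})$, so the factor $\alpha^{-s(s+1)/2}$ and the entire $n^{-1}$ correction cancel in the difference, leaving
\[
v_{n-1-s}=\frac{u_n}{n^2}\,\alpha^{-s(s+1)/2}\Big[(2s+1)\lambda+\tfrac{s(s+1)(2s+1)}{6}\Big]\bigl(1+o(1)\bigr).
\]
Shifting $j=s+1$ turns $(-1)^s\alpha^{-s(s+1)/2}(2s+1)$ into the general term of $h(\alpha)$ and $(-1)^s\alpha^{-s(s+1)/2}\tfrac{s(s+1)(2s+1)}{6}$ into the general term of $-H(\alpha)$, so termwise the rescaled sum $\tfrac{n^2}{u_n}\sum_s(-1)^s v_{n-1-s}$ matches $\lambda h(\alpha)-H(\alpha)$.

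To upgrade this termwise match to the actual sum I would split at a fixed cutoff $M$. On the head $s<M$ the finitely many terms converge to the $M$th partial sum of $\lambda h-H$. On the tail $s\ge M$ I invoke Lemma~\ref{lemmavj}: for $s\ge N-1$ the numbers $v_{n-1-s}$ are positive and decreasing in $s$, so Leibniz gives $\bigl|\sum_{s\ge M}(-1)^s v_{n-1-s}\bigr|\le v_{n-1-M}$, and the super-exponential factor $\alpha^{-M(M+1)/2}$ forces $v_{n-1-M}/(u_n/n^2)$ to be arbitrarily small once $M$ is large (the same factor making the series for $\lambda h-H$ converge). Letting $M\to\infty$ after $n\to\infty$ yields $\tfrac{n^2}{u_n}\sum_{s=0}^{n-1}(-1)^s v_{n-1-s}\to\lambda h(\alpha)-H(\alpha)$. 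Since $u_n/n^2>0$ and $\lambda h-H\ne 0$, for all large $n$ the sign of this sum equals that of $\lambda h-H$; combined with the negligible $R_n$ this gives $\mathrm{sign}\,f(\xi_n)=(-1)^n\,\mathrm{sign}(\lambda h(\alpha)-H(\alpha))$, which is exactly $(-1)^n[\lambda h(\alpha)-H(\alpha)]f(\xi_n)>0$.

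The step I expect to be the main obstacle is precisely the passage from the clean termwise $n^{-2}$ asymptotics to the summed statement: the individual expansions of $u_{n+s}/u_n$ are valid only for $s$ small compared with $n$, whereas the sum runs up to $s=n-1$, so I must control the bulk terms without termwise expansions. The positivity and monotonicity of $v_j$ from Lemma~\ref{lemmavj} are exactly what make this uniform control possible, reducing the whole bulk to a single Leibniz estimate. The other delicate point is verifying that the $n^{-1}$ contributions genuinely cancel in $v_{n-1-s}$ rather than leaving a spurious $n^{-1}$ leading term, which is what guarantees the limit lands at the $n^{-2}$ order and produces exactly the combination $\lambda h-H$.
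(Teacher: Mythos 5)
Your proof is correct, and its skeleton coincides with the paper's: the same pairing of the alternating series into the $v_j$, the same termwise asymptotics $v_{n-1-s}\sim \frac{u_n}{n^2}\,\alpha^{-s(s+1)/2}\bigl[(2s+1)\lambda+\tfrac{s(s+1)(2s+1)}{6}\bigr]$, the same use of Lemma~\ref{lemmavj} to control the bulk, and the same $|R_n|\le u_{2n}$ disposal of the far tail. Where you genuinely differ is in the assembly of the $v$-sum, and there your route is both simpler and stronger. The paper does not prove a limit: it fixes a cutoff of a definite parity ($2N-1$ terms), invokes Lemma~\ref{lemmapsum} (the interlacing of the ratios $H_m/h_m$) to ensure that the head $\lambda h_{2N-1}-H_{2N-1}$ already carries the sign of $\lambda h-H$, and groups the remaining sum $\sum_{j\ge 2N}(-1)^{j-1}v_{k-j}$ into consecutive pairs, each of that same sign by monotonicity, so that head and tail are separately sign-definite and never need to be compared in size. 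You instead bound the whole tail by its first term (Leibniz, legitimate since Lemma~\ref{lemmavj} gives positivity and monotonicity on the full range once the cutoff $M$ exceeds the lemma's $N$), observe that this first term is $O\bigl(\alpha^{-M(M+1)/2}\bigr)$ on the natural scale $u_n/n^2$, and let $M\to\infty$ after $n\to\infty$; this bypasses Lemma~\ref{lemmapsum} entirely and yields the sharper statement $f(\xi_n)=(-1)^n\frac{u_n}{n^2}\bigl(\lambda h(\alpha)-H(\alpha)+o(1)\bigr)$, from which the sign claim is immediate. The trade-off is negligible: both arguments use the termwise expansion only at a fixed cutoff while $n\to\infty$, so your double-limit argument costs nothing in rigor, and it isolates the actual asymptotic constant rather than just its sign.
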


\begin{proof}Fix an integer $j\ge 0$. Then direct calculation shows, for large $k$
\[{v_{k - j}} = \frac{{{{(k + \lambda {k^{ - 1}})}^{k + j - 3}}}}
{{(k + j - 1)!}}\left[ {(2j - 1)\lambda  + \frac{j}
{6}(j - 1)(2j - 1) + O({k^{ - 1}})} \right]{\alpha ^{(k + j - 1)(k - j)/2}}.\]
By Lemma \ref{lemmavj} there exists a positive integer $N_1=N_1(\alpha, \lambda)$ such that
\[ v_{k-N_1}>v_{k-N_1-1}>\cdot\cdot\cdot>v_{0}.\]
If $\lambda h(\alpha)-H(\alpha) <0$, by Lemma \ref{lemmapsum} there exists a positive integer $N_2(\alpha, \lambda)$ such that  for any integers $m\ge N_2$,  $\lambda h_{2m-1}-H_{2m-1}<0$.

Let $N={\rm max} \{ [N_1/2], N_2\}$.
Direct calculation yields
\[\sum\limits_{j = 1}^{2N - 1} {{v_{k - j}}{{( - 1)}^{j - 1}}}  = \frac{{{{(k + \lambda {k^{ - 1}})}^{k - 2}}}}
{{k!}}\left[ {\lambda {h_{2N - 1}}(\alpha ) - {H_{2N - 1}}(\alpha ) + O({k^{ - 1}})} \right]{\alpha ^{k(k - 1)/2}}.\]
Note that for large $k(k>K(\alpha, \lambda))$,
\[\lambda {h_{2N - 1}} - {H_{2N - 1}} + O(k^{ - 1})<0.\]
Hence for each $k>{\rm max}\{2N+1, K\}$,
\[ \sum\limits_{j = 1}^{2N - 1} {{v_{k - j}}{{( - 1)}^{j - 1}}}<0\]
and
\[\sum\limits_{j = 2N}^k {{v_{k - j}}{{( - 1)}^{j - 1}} < {v_{k - 2N - 1}} - {v_{k - 2N}} < 0}.\]
So we have
\[{( - 1)^k}\sum\limits_{n = 0}^{2k - 1} {{u_n}{{( - 1)}^n}}  = \sum\limits_{j = 1}^k {{v_{k - j}}{{( - 1)}^{j - 1}} < 0}.\]
Since \[\left| {\sum\limits_{n = 0}^{2k - 1} {{u_n}{{( - 1)}^n}} } \right| > {C_1}\frac{{{{(k + \lambda {k^{ - 1}})}^{k - 2}}}}
{{k!}}{\alpha ^{k(k - 1)/2}} > \frac{{{{(k + \lambda {k^{ - 1}})}^{2k}}}}
{{(2k)!}}{\alpha ^{ - k}} = {u_{2k}},\]
 and \[u_{2k}>\left| {\sum\limits_{n = 2k}^\infty  {{u_n}{{( - 1)}^n}} } \right|,\]
we derive \[{( - 1)^k}f( - (k + \lambda {k^{ - 1}}){\alpha ^{k - 1}}) ={( - 1)^k}\sum\limits_{n = 0}^\infty  {{u_n}{{( - 1)}^n}}  < 0\] for large $k$.
Similarly, if  $\lambda h(\alpha)-H(\alpha)>0$, we also have for large $k$
\[{( - 1)^k}f( - (k + \lambda {k^{ - 1}}){\alpha ^{k - 1}}) > 0,\]which completes the proof.
\end{proof}

\begin{proposition}\label{langley}For any integer $n\ge 1$, we have \[x_{n+1}<\alpha x_n<0.\]
For large $n$, \[\frac{{{x_{n + 1}}}}
{{\alpha{x_n}}} = 1 + \frac{1}
{n}+o({n^{ - 2}}).\]
\end{proposition}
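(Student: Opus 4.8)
The plan is to treat the two assertions separately. For the global separation $x_{n+1} < \alpha x_n < 0$ (valid for all $n \ge 1$) I would use the functional equation rather than Lemma \ref{sign}. Since $f'(x) = f(qx) = f(x/\alpha)$, the zeros of $f'$ are exactly $\{\alpha x_m\}_{m \ge 1}$. As $f$ is real entire of order zero with only simple negative zeros, Hadamard factorization gives $f(x) = \prod_{m \ge 1}(1 - x/x_m)$, hence $f'(x)/f(x) = \sum_{m \ge 1}(x - x_m)^{-1}$. On each gap $(x_{n+1}, x_n)$ this logarithmic derivative is strictly decreasing from $+\infty$ to $-\infty$, so $f'$ has exactly one zero there and none in $(x_1, \infty)$; comparing with the decreasing list $\alpha x_1 > \alpha x_2 > \cdots$ of zeros of $f'$ and inducting from the right forces the zero in $(x_{n+1}, x_n)$ to be $\alpha x_n$. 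This yields $x_{n+1} < \alpha x_n < x_n < 0$ for every $n$.

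For the ratio I would run a squeeze built on Lemma \ref{sign}. Put $\lambda^* = H(\alpha)/h(\alpha)$, which is positive by Lemma \ref{lemmapsum}. Fix a small $\epsilon > 0$; since $\xi_n(\lambda)$ decreases in $\lambda$, Lemma \ref{sign} with $\lambda = \lambda^* \mp \epsilon$ shows that for large $n$ the values $f(\xi_n(\lambda^* - \epsilon))$ and $f(\xi_n(\lambda^* + \epsilon))$ carry opposite signs $(-1)^{n+1}$ and $(-1)^n$ (recall $f(0)=1$ and that the simple negative zeros give $\operatorname{sgn} f = (-1)^n$ on $(x_{n+1},x_n)$). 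Hence $f$ vanishes in the window $[\xi_n(\lambda^* + \epsilon), \xi_n(\lambda^* - \epsilon)]$, whose relative width is $O(\epsilon/n^2)$. By the separation from the first part, consecutive zeros differ in modulus by a factor exceeding $\alpha$, so such a thin window contains exactly one zero $z_n$; letting $\epsilon \to 0$ through a diagonal choice of $n$ yields $z_n = -(n + \lambda^*/n + o(1/n))\alpha^{n-1}$.

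I would then invoke the separation once more to identify these $z_n$: an extra zero inside one inter-window region would place three zeros there, forcing the endpoints to have modulus ratio $> \alpha^2$, whereas those endpoints have ratio $\sim \alpha(1 + 1/n) < \alpha^2$ for large $n$. So the $z_n$ are consecutive and exhaust the zeros beyond some point, i.e. $z_n = x_{n+c}$ for a fixed integer $c$. Substituting $x_m = z_{m-c}$ into $x_{m+1}/(\alpha x_m)$ and expanding gives $1 + 1/m + c/m^2 + o(1/m^2)$; this last computation is routine, and it shows that the stated error $o(n^{-2})$ is exactly equivalent to $c = 0$. (Note the statement coincides with Langley's \eqref{ratio}, so one could instead cite it, but the self-contained route via Lemma \ref{sign} is what exposes the real issue.)

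The main obstacle is precisely this index matching. No purely asymptotic input can pin $c$: pairing $z_n \sim -n\alpha^{n-1}$ with Langley's $x_m \sim -m\alpha^{m-1}\gamma$ from \eqref{xnlangley} only yields the relation $\gamma = \alpha^{-c}$, and the zero-counting density is self-consistent for every fixed $c$. To force $c = 0$ (equivalently $\gamma = 1$) I would have to bring in genuinely global, non-asymptotic information about the small zeros — for instance combining the exact interlacing of the first part with the coefficient identity $\sum_m |x_m|^{-1} = f'(0)/f(0) = 1$ to anchor the count at the first zero and propagate the indexing upward. This global step, not the asymptotic expansion, is the delicate heart of the argument, and it is exactly where the sharpening $\gamma = 1$ over Langley's result is won.
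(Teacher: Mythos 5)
Your first assertion is handled correctly: writing $f'(x)/f(x)=\sum_{m\ge 1}(x-x_m)^{-1}$ (via Proposition \ref{morris}), noting this is strictly decreasing from $+\infty$ to $-\infty$ on each gap $(x_{n+1},x_n)$ and positive on $(x_1,\infty)$, and matching the resulting one-zero-per-gap count against the decreasing list $\{\alpha x_m\}$ of zeros of $f'$ does yield $x_{n+1}<\alpha x_n<x_n<0$ for all $n$. Bear in mind, though, that the paper does not prove this proposition at all: both assertions are quoted from Langley \cite{langley2000certain}, so any self-contained derivation is already a departure from the paper's route.

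The second assertion is where your proposal has a genuine gap, and you have diagnosed it yourself. The squeeze via Lemma \ref{sign} is sound as far as it goes: it produces zeros $z_n=-(n+\lambda^*/n+o(1/n))\alpha^{n-1}$ with $\lambda^*=H(\alpha)/h(\alpha)$, the separation argument shows these are eventually consecutive, so $z_n=x_{n+c}$ for a fixed integer $c$, and your computation correctly shows that the stated ratio asymptotic is \emph{equivalent} to $c=0$. But $c=0$ is never proved, and the repair you sketch cannot close the hole: the identity $\sum_m|x_m|^{-1}=f'(0)/f(0)=1$ involves the contributions of the finitely many small zeros, about which the asymptotic analysis says nothing, so interlacing, the tail asymptotics, and this identity are mutually consistent for any admissible value of $c$ — they impose no integer constraint. (The most that sign information can give is parity: since the sign of $f$ at a nonzero point $t<0$ equals $(-1)^{\#\{m:\,x_m>t\}}$, Lemma \ref{sign} forces $c$ to be even, not zero.) This missing index normalization is exactly the content that the paper imports by citing Langley's \eqref{ratio}, which is proved there by an independent analysis of consecutive zeros through the functional equation; the paper then runs your computation in the opposite direction in the proof of Theorem \ref{thm}, using the correctly indexed ratio of Proposition \ref{langley} together with the window analysis to conclude $c=0$, i.e.\ that the windowed root is $x_n$ and hence $\gamma=1$. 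In short: part one is fine, but part two is a proof of a strictly weaker statement (existence of the zeros $z_n$ with the right asymptotics, up to an undetermined even index shift), and the step you concede is the "delicate heart" is precisely what remains unproved.
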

\begin{proof}See Langley\cite{langley2000certain}.
\end{proof}

\begin{proof}[Proof of Theorem \ref{thm}] Set ${\lambda _0} = 1 + H(\alpha )/h(\alpha )$ and
\[{I_n} = ( - (n + \lambda_0/n){\alpha ^{n - 1}}, - n{\alpha ^{n - 1}}), n\ge 1.\]
Combining Lemma \ref{sign} with Proposition \ref{langley}, we obtain that $f(x)$ has precisely one root in the interval $I_n$ for large $n$. So the root can be written as $ - (n + {\theta _n}(\alpha )){\alpha ^{n - 1}}$, where ${\theta _n}(\alpha )=O(n^{-1})$, and \[\mathop {\lim }\limits_{n \to \infty } n{\theta _n} = \mathop {\lim }\limits_{n \to \infty } \frac{{{H_{n}}(\alpha )}}
{{{h_{n}}(\alpha )}} = \frac{{H(\alpha )}}
{{h(\alpha )}} = \sum\limits_{k = 1}^\infty  {\frac{k}
{{{\alpha ^k} - 1}}}.\]
Since \[\frac{{ - (n + 1 + {\theta _{n + 1}}){\alpha ^n}}}
{{ - (n + {\theta _n}){\alpha ^n}}} = 1 + \frac{1}
{n} + o({n^{ - 2}}),\]
by Proposition \ref{langley} we immediately derive that the root in the interval $I_n$ is just $x_n$ for large $n$, namely ${x_n} =  - (n + {\theta _n}){\alpha ^{n - 1}}$.
Noting that \[\sum\limits_{k = 1}^\infty  {\frac{k}
{{{\alpha ^k} - 1}}}  = \sum\limits_{k = 1}^\infty  {\sigma (k)} {\alpha ^{ - k}},\]
and $1/q=\alpha$, we complete the proof.\end{proof}

\begin{proposition}\label{morris}The function $f(x)$ in (\ref{fx}) can be factored into an infinite product
\[f(x) = \prod\limits_{n = 1}^\infty  {\Big( {1 - \frac{x}
{{{x_n}}}} \Big)} ,\]
where each $x_n$ is the zero of $f(x)$ and $x_{n+1}<x_n<0$.
\end{proposition}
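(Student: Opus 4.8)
The plan is to deduce the factorization from Hadamard's theorem, for which the decisive fact is that $f$ is an entire function of order $0$. First I would record that $f$ is entire with $f(0)=1$ (it is the everywhere-convergent series \eqref{fx}), and then compute its order directly from the Taylor coefficients $a_n = q^{n(n-1)/2}/n!$. Using the coefficient formula
\[\rho = \limsup_{n \to \infty} \frac{n \ln n}{\ln(1/|a_n|)},\]
and noting that $\ln(1/|a_n|) = -\tfrac{n(n-1)}{2}\ln q + \ln(n!) \sim \tfrac12(-\ln q)\,n^2$ since $0<q<1$, the denominator grows like $n^2$ while the numerator grows only like $n\ln n$; hence $\rho = 0$.

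Next I would verify that the candidate product converges. By Theorem \ref{thm} we have $|x_n| \sim n\alpha^{\,n-1}$ with $\alpha = 1/q > 1$, so $\sum_n |x_n|^{-1}$ converges geometrically (indeed $\sum_n |x_n|^{-s} < \infty$ for every $s>0$, consistent with convergence exponent $0$). Therefore the canonical product $\prod_{n\ge 1}\big(1 - x/x_n\big)$, built from genus-zero elementary factors, converges absolutely and locally uniformly and defines an entire function of order $0$ whose zeros are exactly the $x_n$ (each simple).

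Finally I would invoke Hadamard's factorization theorem. Since $f$ has order $0$ and $f(0)=1\neq 0$, and since $\{x_n\}$ is its complete zero set, Hadamard gives $f(x) = e^{P(x)}\prod_{n\ge1}\big(1 - x/x_n\big)$ with $\deg P \le \lfloor\rho\rfloor = 0$, i.e.\ $P$ constant. Evaluating at $x=0$ forces $e^{P}=1$, so $P\equiv 0$ and the product representation follows; the ordering $x_{n+1} < x_n < 0$ is already part of Theorem \ref{thm}.

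The genuinely substantial input — and the step I would flag as the main obstacle, were a self-contained argument demanded — is not the order computation (which is routine), but the assertion that $f$ has no zeros off the negative real axis. Without it, Hadamard yields only $f = e^{P}\prod_n\big(1-x/\zeta_n\big)$ over the full, a priori complex, zero set $\{\zeta_n\}$, and the content of the proposition is precisely that this set reduces to $\{x_n\}$. This is the multiplier-sequence/Laguerre argument of Morris et al.\cite{morris1972phragmen} quoted in the introduction, which I would cite rather than reprove.
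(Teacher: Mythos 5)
Your proposal is correct, and it takes a genuinely different route from the paper: the paper's entire proof of Proposition \ref{morris} is the citation ``See Morris et al.\ \cite{morris1972phragmen}'', whereas you reconstruct the factorization from standard entire-function theory and outsource only the zero-location statement. Your order computation is right ($\ln(1/|a_n|) \sim \tfrac12 n^2 \ln(1/q)$ dominates $n\ln n$, so $\rho = 0$), Hadamard then gives $f = e^{P}\prod_n (1 - x/x_n)$ with $P$ constant, and $f(0)=1$ kills the constant; and you correctly identify that the irreducible input is that the zero set consists precisely of simple negative real zeros. Two refinements are worth making. First, you do not need Theorem \ref{thm} (nor Langley's estimate \eqref{xnlangley}) to get convergence of the product: for an entire function of order $0$ the exponent of convergence of its zeros is $0$, so the genus-zero canonical product converges as part of Hadamard's theorem itself. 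Your use of Theorem \ref{thm} is not circular within the paper --- that theorem is proved without Proposition \ref{morris}, which is only invoked later for Corollary \ref{coro} --- but it is an unnecessary dependency, and dropping it keeps the proposition freestanding. Second, for the fact that $f$ has no zeros off the negative real axis you should cite Iserles \cite{iserles1993generalized} alongside Morris et al., since, as the introduction notes, the multiplier-sequence argument in \cite{morris1972phragmen} excluding non-real zeros had a gap that was filled there. With those citations in place, your argument is a complete proof that is more self-contained than the paper's, and it has the virtue of isolating exactly where the nontrivial input lies.
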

\begin{proof}See Morris et al.\cite{morris1972phragmen}.
\end{proof}

Since $f'(x)=f(x/\alpha)$, the critical points of $f(x)$ are $\alpha x_n(n\ge 1)$. Note that each zero of $f(x)$ is simple and $x_{n+1}<\alpha x_n< x_n, n\ge 1$, then the signs of the extrema $f(\alpha x_n)$ are alternate, namely ${( - 1)^n}f(\alpha {x_n}) > 0$. Then we can obtain an asymptotic formula for the oscillation amplitude of $f(x)$ on the negative real axis by the infinite product representation above.

\begin{proof}[Proof of Corollary \ref{coro}] By Theorem \ref{thm}, we have
\[1 - \frac{{\alpha {x_n}}}
{{{x_{n + 1}}}} = \frac{1}
{{n + 1}} + o({n^{ - 2}}),\]
\[\left| {\frac{{\alpha {x_n}}}
{{{x_k}}}} \right| \ge {\alpha ^{n + 1 - k}},\ k< n+1 \quad {\rm and} \quad \left| {\frac{{\alpha {x_n}}}
{{{x_k}}}} \right| < {\alpha ^{n + 1 - k}},\  k\ge n+1.\]

Let $\varepsilon_n=(1+H(\alpha)/h(\alpha))/n$. Since for large $n$, $ |x_n|=(n+\theta_n)\alpha^{n-1}, \theta_n=O(n^{-1})$,
by the Stirling's approximation for Gamma function, we have
\[\begin{gathered}
  \left| {f(\alpha {x_n})} \right| = \left( {1 - \frac{{\alpha {x_n}}}
{{{x_{n + 1}}}}} \right)\prod\limits_{k = 1}^n {\left( {\frac{{\alpha {x_n}}}
{{{x_k}}} - 1} \right)} \prod\limits_{k = n + 2}^\infty  {\left( {1 - \frac{{\alpha {x_n}}}
{{{x_k}}}} \right)}  \hfill \\
   > \frac{1}
{{2n}}\frac{{|\alpha {x_n}{|^n}}}
{{|{x_1} \cdot  \cdot  \cdot {x_n}|}}\prod\limits_{k = 1}^n {\left( {1 - \left| {\frac{{{x_k}}}
{{\alpha {x_n}}}} \right|} \right)} \prod\limits_{k = 1}^\infty  {\left( {1 - {\alpha ^{ - k}}} \right)}  \hfill \\
   > \frac{1}
{{2n}}\frac{{{n^n}{\alpha ^{{n^2}}}}}
{{{C_0}\Gamma(n + {\varepsilon_n+1}){\alpha ^{n(n - 1)/2}}}}\phi {(\alpha )^2} > {C_1}{n^{ - 3/2}}{e^n}{\alpha ^{n(n + 1)/2}} \hfill \\
\end{gathered} \]
where $\phi (\alpha ) = \prod\limits_{k = 1}^\infty  {(1 - {\alpha ^{ - k}})}$, $C_0(\alpha)$ and $C_1(\alpha)$ are positive numbers independent of $n$.

On the other hand, since $f(x)$ is entire, by Proposition \ref{morris} we have\[\sum\limits_{{{i_1} <  \cdot  \cdot  \cdot  < {i_n}}} {\frac{1}
{{{x_{{i_1}}}{x_{{i_2}}}\cdot\cdot\cdot{x_{{i_n}}}}}}  = \frac{{{{( - 1)}^n}}}
{{n!}}{\alpha ^{ - n(n - 1)/2}}.\]
Hence we get
\[\begin{gathered}
  \left| {f(\alpha {x_n})} \right| = \left( {1 - \frac{{\alpha {x_n}}}
{{{x_{n + 1}}}}} \right)\prod\limits_{k = 1}^n {\left( {\left| {\frac{{\alpha {x_n}}}
{{{x_k}}}} \right| - 1} \right)} \prod\limits_{k = n + 2}^\infty  {\left( {1 - \left| {\frac{{\alpha {x_n}}}
{{{x_k}}}} \right|} \right)}  \hfill \\
   < \frac{2}
{{n + 1}}\frac{{|\alpha {x_n}{|^n}}}
{{|{x_1}\cdot\cdot\cdot{x_n}|}}\prod\limits_{k = 1}^n {\left( {1 + \left| {\frac{{{x_k}}}
{{\alpha {x_n}}}} \right|} \right)} \prod\limits_{k = 1}^\infty  {\left( {1 + {\alpha ^{ - k}}} \right)}  \hfill \\
   < \frac{2}
{{n{\text{  +  }}1}}\frac{{{{(n + 1)}^n}{\alpha ^{{n^2}}}}}
{{n!{\alpha ^{n(n - 1)/2}}}}\Phi {(\alpha )^2} < {C_2}{n^{ - 3/2}}{e^n}{\alpha ^{n(n + 1)/2}} \hfill \\
\end{gathered} \]
where $\Phi (\alpha ) = \prod\limits_{k = 1}^\infty  {(1 + {\alpha ^{ - k}})}$ and $C_2(\alpha)$ are positive numbers independent of $n$. Then the proof is complete.
\end{proof}

\section*{Acknowledgments}
I am grateful to Professor Dexing Kong and Professor Wei Luo for all their help with this paper. I am also grateful to Professor Alan Sokal for going through an early draft of this paper and his helpful suggestions. It is my pleasure to thank my former colleagues Jianxin Sun and Liuquan Wang for their helpful comments.
\bibliographystyle{amsplain}
\bibliography{xbib}

\end{document}